\newcommand{\K}{\mathbf{K}}
\newcommand{\LS}{\operatorname{LS}}
\newcommand{\lea}{\leap{\K}}
\newcommand{\Ii}{\mathbb{I}}
\theoremstyle{definition}
\newtheorem{mydef}{Definition}[section]
\newtheorem{lem}[mydef]{Lemma}
\newtheorem{thm}[mydef]{Theorem}
\newtheorem{cor}[mydef]{Corollary}
\newtheorem{question}[mydef]{Question}
\newtheorem{prop}[mydef]{Proposition}
\newtheorem{remark}[mydef]{Remark}
\newtheorem{notation}[mydef]{Notation}
\newtheorem{fact}[mydef]{Fact}
\newtheorem*{maint}{Theorem 3.14}
\newcommand{\leap}[1]{\le_{#1}}
\newcommand{\gS}{\mathbf{S}}
\newcommand{\cf}{\text{cf }}
\newcommand{\rest}{\upharpoonright}
\newcommand{\concat}{%
  \mathord{
    \mathchoice
    {\raisebox{1ex}{\scalebox{.7}{$\frown$}}}
    {\raisebox{1ex}{\scalebox{.7}{$\frown$}}}
    {\raisebox{.5ex}{\scalebox{.5}{$\frown$}}}
    {\raisebox{.5ex}{\scalebox{.5}{$\frown$}}}
  }
}
\newbox\noforkbox \newdimen\forklinewidth
\noforkbox\hbox{\lower 2pt\box1\lower
2pt\box0\relax}
\def\unionstick{\mathop{\copy\noforkbox}\limits}
\newbox\doesforkbox
\doesforkbox\hbox{\lower 0pt\box1 \lower
2pt\box2\lower2pt\box0\relax}
\def\1nf{\unionstick^{(1)}}
\def\2nf{\unionstick^{(2)}}
\def\3nf{\unionstick^{(3)}}
\def\forkindep{\mathrel{\raise0.2ex\hbox{\ooalign{\hidewidth$\vert$\hidewidth\cr\raise-0.9ex\hbox{$\smile$}}}}}
\newcommand{\type}{\mathbf{gtp}}
\newcommand{\gtp}{\mathbf{gtp}}
\title{Building models in small cardinals in local abstract elementary classes}
\date{\today\\
AMS 2020 Subject Classification: Primary:  03C48. Secondary:  03C45, 03C52, 03C55.} 
\keywords{Abstract Elementary Classes;  Classification Theory; Stability; Local Abstract Elementary Classes.}
\author{Marcos Mazari-Armida}
\email{marcos\_mazari@baylor.edu}
\urladdr{https://sites.baylor.edu/marcos\_mazari/}
\address{Department of Mathematics \\ Baylor University \\ Waco, Texas, USA}
\thanks{The first author's research was partially supported by an AMS-Simons Travel Grant 2022--2024.}
\author{Wentao Yang}
\email{wentaoyang@cmu.edu}
\urladdr{https://math.cmu.edu/~wentaoya/}
\address{Department of Mathematical Sciences \\ Carnegie Mellon University \\ Pittsburgh, Pennsylvania, USA}
\begin{document}

\begin{abstract} 

There are many results in the literature where superstablity-like independence notions, without any categoricity assumptions, have been used to show the existence of larger models. In this paper we show that \emph{stability} is enough to construct larger models  for small cardinals assuming a mild locality condition for Galois types.

\begin{thm}
Suppose $\lambda<2^{\aleph_0}$. Let $\K$ be an abstract elementary class with $\lambda \geq \LS(\K)$.  Assume $\K$ has amalgamation in $\lambda$, no maximal model in $\lambda$, and is stable in $\lambda$. If $\K$ is $(<\lambda^+, \lambda)$-local, then $\K$ has a model of cardinality $\lambda^{++}$.
\end{thm}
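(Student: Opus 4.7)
The plan is to reduce the theorem to the intermediate statement that $\K$ has no maximal model in $\lambda^+$. Granting that, one builds a continuous increasing chain $\langle N_\alpha : \alpha < \lambda^{++}\rangle$ in $\K_{\lambda^+}$ with $N_\alpha \lneq_{\K} N_{\alpha+1}$; every proper initial segment of the chain has cardinality at most $\lambda^+$ (since $\lambda^{++}$ is regular and cofinalities of ordinals below $\lambda^{++}$ are at most $\lambda^+$), so the construction remains in $\K_{\lambda^+}$ at all proper stages, while $\bigcup_{\alpha<\lambda^{++}} N_\alpha$ has cardinality $\lambda^{++}$. The seed $N_0 \in \K_{\lambda^+}$ is produced by taking a continuous $\leq_{\K}$-chain of length $\lambda^+$ in $\K_\lambda$, using amalgamation and no maximal model in $\lambda$ at successor steps.

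For the intermediate statement, I would first unpack the $\K_\lambda$-theory activated by the hypotheses: stability together with amalgamation and no maximal model yields universal extensions and $(\lambda,\alpha)$-limit models over every $M \in \K_\lambda$ for each limit $\alpha < \lambda^+$, and a well-behaved $\lambda$-nonsplitting notion for Galois types over $\lambda$-sized models, with existence and uniqueness of nonsplitting extensions. The locality hypothesis $(<\lambda^+,\lambda)$-local then lifts this control to $\K_{\lambda^+}$: given a $\lambda$-filtration $M = \bigcup_{i<\lambda^+} M_i$ of a model in $\K_{\lambda^+}$, Galois types over $M$ are determined by coherent sequences of their restrictions to the $M_i$, so the type theory over $M$ reduces to the type theory over the $M_i$ inside $\K_\lambda$.

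The heart of the proof is the explicit construction of a proper extension $M' \in \K_{\lambda^+}$ of an arbitrary $M = \bigcup_{i<\lambda^+} M_i \in \K_{\lambda^+}$. Choose $N_0 \in \K_\lambda$ with $M_0 \lneq_{\K} N_0$ and an element $c \in N_0 \setminus M_0$; set $p = \gtp(c/M_0)$. Inductively build a coherent chain $\langle M_i' : i < \lambda^+\rangle$ in $\K_\lambda$ with $M_0' = N_0$, $M_i \leq_{\K} M_i'$, and $M_i' \leq_{\K} M_{i+1}'$, so that $c \in M_i'$ realizes the unique $\lambda$-nonsplitting extension of $p$ to $M_i$. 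At a successor stage the amalgam $M_{i+1}'$ of $M_i'$ and $M_{i+1}$ over $M_i$ must be selected so that the embedding $M_{i+1} \hookrightarrow M_{i+1}'$ genuinely extends the identity on $M_i$ and does not collapse $c$ into the image of $M_{i+1}$; stability (through uniqueness of nonsplitting extensions) singles out a canonical amalgam, and $(<\lambda^+,\lambda)$-locality is what forces these canonical choices to cohere at limit stages, producing a genuine filtration of $M$ sitting inside $M' = \bigcup_i M_i'$. The small-cardinal hypothesis $\lambda < 2^{\aleph_0}$ is expected to enter through a weak-diamond style combinatorial ingredient that either suppresses the bad branches in the tree of possible inductive choices or certifies the existence of a non-collapsing branch; this is the step I expect to be the most delicate. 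Once no maximal model in $\lambda^+$ is in hand, the chain argument from the first paragraph produces the desired model of cardinality $\lambda^{++}$.
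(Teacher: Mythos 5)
Your outer skeleton matches the paper --- reduce to ``$\K$ has no maximal model in $\lambda^+$,'' filter a model of cardinality $\lambda^+$ by $\lambda$-sized submodels, and build a coherent increasing chain of non-algebraic types whose upper bound gives a proper extension --- but the engine you propose to drive the chain does not run under these hypotheses, and the one place you acknowledge not understanding is exactly where the theorem's main idea lives. First, the nonsplitting machinery: existence of a small nonsplitting base for $p=\gtp(c/M_0)$, uniqueness of nonsplitting extensions, and (crucially) preservation of the relevant property at limit stages of your chain all require the base models to be limit/universal extensions and a continuity property that is of superstability type; mere stability in $\lambda$ does not supply this for an arbitrary type over an arbitrary $M_0$. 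This is precisely why the closest prior result, \cite[12.1]{vaseyt}, assumes categoricity in $\lambda$ in addition to tameness --- the assumption this theorem is designed to avoid. Second, your guess that $\lambda<2^{\aleph_0}$ enters through a weak-diamond-style ingredient is off target: weak diamond is a $2^\lambda<2^{\lambda^+}$ phenomenon and plays no role here.

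The missing idea is the $\lambda$-unique (quasiminimal) type (Definition \ref{duni}): a type with the $\lambda$-extension property having \emph{at most one} extension with the $\lambda$-extension property over every $M'\in\K_\lambda$. The hypothesis $\lambda<2^{\aleph_0}$ is used exactly once and concretely: stability gives $|\gS^{na}(M)|\leq\lambda<2^{\aleph_0}$, and if no $\lambda$-unique type existed above some $p\in\gS^{\lambda-ext}(M_0)$, a binary tree of height $\omega$ of pairwise distinct extension-property extensions would yield $2^{\aleph_0}$ distinct non-algebraic types over a model of cardinality $\lambda$ --- contradiction (Lemmas \ref{ex-ext} and \ref{l2}). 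The paper then argues by contradiction from a maximal $M\in\K_{\lambda^+}$ (maximality is used to realize the $\lambda$-unique type $q_0$ over a submodel $M_0\leq_{\K}M$, which your direct-construction framing cannot arrange), takes a resolution $\langle M_i\rangle$ of $M$ above $M_0$, and builds a coherent chain $p_i\in\gS^{\lambda-unq}(M_i)$. At a limit stage $i$, the coherent upper bound $p_i$ agrees on each $M_j$ with some $q\in\gS^{\lambda-ext}(\bigcup_{j<i}M_j)$ extending $q_0$, because both restrictions are extension-property extensions of the $\lambda$-unique $q_0$; then $(<\lambda^+,\lambda)$-locality forces $p_i=q$, so $p_i$ retains the $\lambda$-extension property and the construction continues. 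This is the step your phrase ``locality forces the canonical choices to cohere'' would have to perform, and nonsplitting cannot perform it here. Without an analogue of the $\lambda$-unique type and the tree argument that produces it, your proposal has no mechanism for either the use of $\lambda<2^{\aleph_0}$ or the survival of the construction through limit stages.
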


The set theoretic assumption that $\lambda<2^{\aleph_0}$ and model theoretic assumption of  stability in $\lambda$ can be weakened to the model theoretic assumptions that  $|\gS^{na}(M)|< 2^{\aleph_0}$ for every $M  \in \K_\lambda$ and stability for $\lambda$-algebraic types in $\lambda$. This is a significant improvement of Theorem 0.1.,  as the result holds on some unstable abstract elementary classes.

\end{abstract}

\maketitle

\section{Introduction}

Abstract elementary classes (AECs for short) were introduced by Shelah \cite{Sh:88} to study classes of structures axiomatized in several infinitary logics. Given an abstract elementary class $\K$ and $\lambda$ an infinite cardinal, $\Ii(\K, \lambda)$ denotes the number of non-isomorphic models in $\K$ of cardinality $\lambda$. One of the main test questions in the development of abstract elementary classes is Grossberg's question \cite[Problem (5), p. 34]{sh576}\footnote{Two earlier and weaker versions of this question are Question 21 of \cite{friedman} (due to Baldwin) and Question 4 on page 421 of \cite{Sh:88} (due to Grossberg). The former has a positive answer due to Shelah \cite{sh48}.}:

\begin{question}
Let $\K$ be an AEC and $\lambda\geq \LS(\K)$ be an infinite cardinal. If $\Ii(\K, \lambda)= 1$ and $1 \leq \Ii(\K, \lambda^+) < 2^{\lambda^+}$, must $\K$ have a model  of cardinality $\lambda^{++}$?
\end{question}

This question is still open. When the AEC has a countable  Löwenheim-Skolem-Tarski number and $\lambda = \aleph_0$, the issue of constructing a model in $\aleph_2$ was discussed in Shelah's pioneering papers \cite{sh74}, \cite{sh87a} and \cite[3.7]{Sh:88}. When the AEC has an uncountable Löwenheim-Skolem-Tarski number or $\lambda > \aleph_0$ Grossberg's question is known to be significantly harder.  For instance, Shelah's best approximation to Grossberg's question \cite[\S VI.0.(2)]{shelahaecbook}, which is a revised version of \cite{sh576} published two decades earlier, is over 300 pages long. Additional approximations to Grossberg's question include: \cite[\S II.4.13.3]{shelahaecbook}, \cite[3.1.9]{jrsh875}, \cite[8.9]{vaseya}, \cite[12.1]{vaseyt}, \cite[5.8]{shvas-apal},  \cite[3.3, 4.4]{mv}, \cite[4.2]{m1}, \cite[1.6, 3.7, 5.4]{vaseys}, \cite[4.9]{leu}.  

A key intermediate step to answer Grossberg's question has been to show that stability  and even the existence of a superstablity-like independence notion follow from categoricity in several cardinals. Recently, there are many results where superstablity-like independence notions, without any categoricity assumptions, have been used to show the existence of larger models  \cite[\S II.4.13.3]{shelahaecbook}, \cite[3.1.9]{jrsh875},  \cite[8.9]{vaseya}, \cite[4.2]{m1}.  In this paper, we show that stability, without  any categoricity assumptions, is enough to construct larger models  for small cardinals assuming a mild locality condition for Galois types.



\begin{maint}
Suppose $\lambda<2^{\aleph_0}$. Let $\K$ be an abstract elementary class with $\lambda \geq \LS(\K)$.  Assume $\K$ has amalgamation in $\lambda$, no maximal model in $\lambda$, and is stable in $\lambda$. If $\K$ is $(<\lambda^+, \lambda)$-local, then $\K$ has a model of cardinality $\lambda^{++}$.
\end{maint}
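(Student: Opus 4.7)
The plan is to build a ``universal'' model $M^*\in\K_{\lambda^+}$ as the union of a $\K$-chain in $\K_\lambda$; then to exhibit a proper $\K$-extension $N\supsetneq M^*$ in $\K_{\lambda^+}$ by constructing and realizing a coherent non-algebraic Galois type; and finally to iterate this extension step $\lambda^+$ times, so that the union of the resulting chain lies in $\K_{\lambda^{++}}$.

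Phase one, the construction of $M^*$, uses amalgamation in $\lambda$, no maximal model in $\lambda$, and stability in $\lambda$ to build a continuous $\K$-increasing chain $(M_\alpha)_{\alpha<\lambda^+}$ with $M_\alpha\in\K_\lambda$ and $M_{\alpha+1}$ realizing every type in $\gS(M_\alpha)$; stability bounds $|\gS(M_\alpha)|\le\lambda$, so each step is a routine enumeration-plus-amalgamation argument. Set $M^*:=\bigcup_{\alpha<\lambda^+}M_\alpha\in\K_{\lambda^+}$, which belongs to $\K$ by the union axioms.

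Phase two constructs, by transfinite recursion, a coherent system of non-algebraic Galois types $p_\alpha\in\gS^{na}(M_\alpha)$ with $p_\beta\rest M_\alpha=p_\alpha$ for $\alpha<\beta$. At the base, $p_0$ exists by no maximal model in $\lambda$. At successors, amalgamation and the universality of $M_{\alpha+1}$ produce some extension of $p_\alpha$ to $M_{\alpha+1}$; the bound $|\gS^{na}(M_{\alpha+1})|<2^{\aleph_0}$, which follows from $\lambda<2^{\aleph_0}$ and stability, is what ensures that among those extensions some remains non-algebraic. At a limit $\alpha<\lambda^+$, the $(<\lambda^+,\lambda)$-locality hypothesis assembles $(p_\beta)_{\beta<\alpha}$ into a unique $p_\alpha\in\gS(M_\alpha)$, automatically non-algebraic since any realization in $M_\alpha$ would lie in some $M_\beta$, contradicting $p_\beta\in\gS^{na}$. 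Passing to the chain-limit at length $\lambda^+$ via the same locality yields $p^*\in\gS^{na}(M^*)$; realizing $p^*$ and trimming by L\"owenheim--Skolem gives the desired $N\in\K_{\lambda^+}$. Phase three then iterates: build a continuous $\K$-increasing chain $(N_\alpha)_{\alpha<\lambda^+}$ with $N_0=M^*$, each $N_\alpha\in\K_{\lambda^+}$, and $N_{\alpha+1}$ produced by running phase two on a universal $\K_\lambda$-filtration of $N_\alpha$; unions at limits remain in $\K_{\lambda^+}$ by the union axioms, and $\bigcup_{\alpha<\lambda^+}N_\alpha\in\K_{\lambda^{++}}$ is the sought model.

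The main obstacle is in phase two, specifically forcing each successor extension $p_{\alpha+1}$ of $p_\alpha$ to remain non-algebraic: this is where the counting bound $|\gS^{na}(M)|<2^{\aleph_0}$---and therefore the set-theoretic hypothesis $\lambda<2^{\aleph_0}$---is used in an essential way, while coherence at limit ordinals of cofinality less than $\lambda^+$ is exactly what $(<\lambda^+,\lambda)$-locality buys us. A secondary difficulty is making phase two reusable in phase three, since each $N_\alpha\in\K_{\lambda^+}$ must continue to admit the appropriate $\K_\lambda$-filtration for the construction to re-apply; this may demand either a uniform version of phase two that applies to arbitrary $N\in\K_{\lambda^+}$ with a suitable filtration, or careful inductive bookkeeping of the filtrations throughout the iteration.
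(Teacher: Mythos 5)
Your overall architecture (filtration, coherent chain of non-algebraic types, realize the limit type, iterate) matches the paper's in outline, but there is a genuine gap at the heart of phase two: nothing in your construction guarantees that the type you are carrying can be extended non-algebraically at the next step. A type $p_\alpha\in\gS^{na}(M_\alpha)$ may be non-algebraic yet have \emph{no} non-algebraic extension to some $\lea$-extension of $M_\alpha$ --- this is exactly the notion of a $\lambda$-algebraic type in the paper --- and since $M_{\alpha+1}$ is universal over $M_\alpha$, such a $p_\alpha$ has no non-algebraic extension to $M_{\alpha+1}$ at all. Your appeal to the bound $|\gS^{na}(M_{\alpha+1})|<2^{\aleph_0}$ at successors does not address this; that bound is used in the paper for an entirely different purpose, namely a binary-tree splitting argument (Lemma \ref{l2}) showing that below any type with the $\lambda$-extension property there is a \emph{$\lambda$-unique} type. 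The danger is acute precisely after limit stages: coherence (Fact \ref{coh}) does give you a non-algebraic upper bound $p_\alpha$ at limits (locality gives only uniqueness, not existence, so your phrasing there is already off), but that upper bound could a priori be $\lambda$-algebraic, and then the construction dies at step $\alpha+1$. The paper's solution is to work throughout with $\lambda$-unique types: at a limit $i$ one takes the coherent upper bound $p_i$, separately produces \emph{some} extension $q$ of the base $\lambda$-unique type $p_0$ with the $\lambda$-extension property over $\bigcup_{j<i}M_j$, uses $\lambda$-uniqueness to see $q\restriction M_j=p_j$ for all $j<i$, and only then invokes $(<\lambda^+,\lambda)$-locality to conclude $q=p_i$, so that $p_i$ inherits the extension property. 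That identification step is the real job locality does, and it is absent from your proposal.

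Two smaller points. First, your final limit ``at length $\lambda^+$'' cannot use $(<\lambda^+,\lambda)$-locality (the chain is too long and the union is too big); it must come from coherence alone. Second, iterating the proper-extension step $\lambda^+$ times yields a union of size $\lambda^+$, not $\lambda^{++}$; you need a chain of length $\lambda^{++}$, or equivalently the paper's cleaner reduction: show that \emph{no} $M\in\K_{\lambda^+}$ is maximal (by running the argument on an arbitrary resolution of an alleged maximal model), which also disposes of your worry about re-applying phase two to each $N_\alpha$.
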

To help us compare our results with previous results, let us recall the following three frameworks: universal classes  \cite{tarski}, \cite{sh300}, tame AECs\cite{tamenessone}  and local AECs \cite{sh576}, \cite{bales}. The first is a semantic assumption on the AEC while the other two are locality assumptions on Galois types (see Definition \ref{tame} and Definition \ref{local}). The relation between these frameworks is as follows: universal classes are $(<\aleph_0, \lambda)$-tame for every $\lambda \geq \LS(\K)$ \cite[3.7]{vaseyd} and $(<\aleph_0, \lambda)$-tame AECs are $(<\lambda^+, \lambda)$-local for every $\lambda \geq \LS(\K)$. The first inclusion is proper and the second inclusion is not known to be proper (see Question \ref{q1}). 

When the AEC has an uncountable Löwenheim-Skolem-Tarski number or $\lambda  > \aleph_0$, Theorem  \ref{main-weak2} is new even for universal classes.  When the AEC has a countable  Löwenheim-Skolem-Tarski number and $\lambda = \aleph_0$, Theorem \ref{main-weak2}    is new for $(\aleph_0, \aleph_0)$-local AECs. For $(<\aleph_0, \aleph_0)$-tame AECs Theorem \ref{main-weak2} can be obtained using \cite[4.7]{shvas-apal}, \cite[5.8]{shvas-apal}, \cite[II.4.13]{shelahaecbook}\footnote{We were unaware of this argument until Sebastien Vasey pointed it when we showed him a final draft of the paper.}, but the result has never been stated in the literature.

A  result similar to Theorem \ref{main-weak2}  is \cite[12.1]{vaseyt}. The main difference is  that Vasey's result has the additional assumption that the AEC is categorical in $\lambda$.  Moreover, Vasey assumes tameness while we only assume the weaker property of locality for Galois types. It is worth mentioning that Vasey does not assume that $\lambda< 2^{\aleph_0}$,  but this is a weak assumption as long as $\lambda$ is a \emph{small} cardinal.



The main difference between the proof of Theorem \ref{main-weak2}  and the previous results is that we focus on finding \emph{one} good type instead of a \emph{family} of good types. A good type in this paper is a $\lambda$-unique type (see Definition \ref{duni}). Once we have this good type,  we carefully build a chain of types above this type to show that every model of cardinality $\lambda^{+}$ has a proper extension and hence show the existence of a model of cardinality $\lambda^{++}$. 

The set theoretic assumption that $\lambda<2^{\aleph_0}$ and model theoretic assumption of  stability in $\lambda$ can be weakened to the model theoretic assumptions that  $|\gS^{na}(M)|< 2^{\aleph_0}$ for every $M  \in \K_\lambda$ and stability for $\lambda$-algebraic types in $\lambda$ (see Theorem \ref{main-2-weak}). The assumption of stability for $\lambda$-algebraic types in $\lambda$ is strictly weaker than stability in $\lambda$ as for instance all elementary classes are stable for $\lambda$-algebraic types in $\lambda$. Moreover, any AEC with disjoint amalgamation in $\lambda$ is stable for $\lambda$-algebraic types in $\lambda$ (see Proposition \ref{p-disj}). Due to this, Theorem \ref{main-2-weak} is a significant improvement to Theorem \ref{main-weak2} as the result holds for some unstable AECs. Theorem \ref{main-2-weak} is new even for universal classes with countable Löwenheim-Skolem-Tarski number and $\lambda = \aleph_0$.

The first version of this paper had an additional section where we presented a positive answer to Grossberg's question  for small cardinals assuming a mild locality condition for Galois types  and without any stability assumptions.  That result relies on a result of Shelah \cite[VI.2.11.(2)]{shelahaecbook} for which Shelah does not provide an argument, for which the \emph{standard} argument does not seem to work, and which we were unable to verify. Due to the status of Shelah's result, following the referee's advice, we decided not to include that result in the paper, but it can be consulted in \cite[4.11]{mawa} and \cite{wyt}.

The paper is organized as follows. Section 2 presents necessary background. Section 3 has the main results. It is worth mentioning that Shelah's papers are labelled using Shelah's numbering instead of their publication year. 

This paper was written while the second author was working on a Ph.D. thesis under the direction of Rami Grossberg at Carnegie Mellon University, and the second author would like to thank Professor Grossberg for his guidance and assistance in his research in general and in this work
specifically. We would like to thank Rami Grossberg for suggesting us to pursue this project and for comments that helped improve the paper. We would also like to thank Sebastien Vasey for many helpful comments that helped improved the paper and for his comments regarding Remark \ref{re<}. We are grateful to the referee for many comments that significantly improved the presentation of the paper, for the equivalence between (2) and (4) of Lemma \ref{many-r} and for Question \ref{q-r}.



\section{Preliminaries}

We assume the reader has some familiarity with abstract elementary classes as presented in \cite[\S 4 - 8]{baldwinbook09}, \cite{grossberg2002} or \cite[\S 2]{shelahaecbook}, but we recall the main notions used in this paper. 

An AEC is a pair $\K=(K \lea)$ where $K$ is a class of structures in a fixed language and $\lea$ is a partial order on $K$ extending the substructure relation such that $\K$ is closed under isomorphisms and satisfies the  coherence property, the L\"{o}wenheim-Skolem-Tarski axiom and the Tarski-Vaught axioms. The reader can consult the definition in \cite[4.1]{baldwinbook09}. 

\begin{notation}
For any structure $M$, we denote its universe by $|M|$, and its cardinality by $\|M\|$. For a cardinal $\lambda$, we let $\K_{\lambda}= \{M\in \K : \|M\| = \lambda \}$. When we write $M \lea N$ we assume that $M, N \in \K$. 
\end{notation}

 For an AEC $\K$, $\K$ has the amalgamation property if for every $M_0\lea M_l$ for $\ell=1,2$, there is $N\in \K$ and $\K$-embeddings $f_\ell:M_\ell\to N$ for $\ell=1,2$ such that $f_1 \restriction_{M_0}=f_2 \restriction_{M_0}$; and $\K$ has no maximal models if every $M\in \K$ has a proper $\lea$-extension in $\K$. For a property $P$, we say that $\K$ has $P$ in $\lambda$ if $\K_\lambda$ has the property $P$.

 Throughout the rest of this section $\K$ is always an abstract elementary class and $\lambda$ is always a cardinal greater than or equal to the Löwenheim-Skolem-Tarski number of $\K$.

We recall the notion of a Galois type. These were originally introduced by Shelah.

\begin{mydef}\
\begin{enumerate}
    \item  For $(b_1, A_1, N_1), (b_2, A_2, N_2)$ such that $N_\ell \in \K$, $A_\ell \subseteq |N|$ and $b_\ell \in N_\ell$ for $\ell =1, 2$, $(b_1, A_1, N_1)E_{\text{at}} (b_2, A_2, N_2)$ if $A
:= A_1 = A_2$, and there exist $\K$-embeddings $f_\ell : N_\ell \to N$ for $\ell = 1, 2$ such that
$f_1 (b_1) = f_2 (b_2)$ and $f_1\restriction_{A}=f_2\restriction_A$. Let $E$ be the transitive closure of $E_{\text{at}}$.

   \item  Given $(b, A, N)$, where $N \in \K$, $A \subseteq |N|$, and $b\in N$, the \emph{Galois type of $b$ over $A$ in $N$}, denoted by $\gtp (b/A, N)$, is the equivalence class of $(b, A, N)$ modulo $E$.

    \item For $M\in \K$, $\gS(M):=\{\type(a/M,N): M \lea N \text{ and }  a \in |N| \}$ denotes the set of all Galois types over $M$ and $\gS^{na}(M):=\{\type(a/M,N): M \lea N \text{ and }  a \in |N| \backslash |M| \}$ denotes the set of all \emph{non-algebraic types} over $M$.
    
    \item Given $p=\gtp(b/A, N)$ and $C \subseteq A$, let $p\upharpoonright{C}= [(b, C, N)]_{E}$. Given $M \lea N$, $p \in \gS(N)$ and $q \in  \gS(M)$, $p$ \emph{extends $q$}, denoted by $q \leq p$, if $p\restriction_M = q$. 
\end{enumerate}
\end{mydef}

Tameness and locality are properties asserting that distinct Galois types are witnessed locally. Tameness appears in some of the arguments of \cite{sh394} and was isolated in \cite{tamenessone}. Locality appears for the first time in-print in \cite{sh576}.

\begin{mydef}\label{tame}\
\begin{enumerate}
\item $\K$ is \emph{$(\kappa,\lambda)$-tame} if for every $M \in \K_\lambda$ and every $p,q\in \gS(M)$, if $p\neq q$, then there is $A \subseteq |M|$ of cardinality $\kappa$ such that $p\restriction_A \neq q\restriction_A$.
    \item $\K$ is \emph{$(< \kappa, \lambda)$-tame} if for every $M \in \K_\lambda$ and every $p,q\in \gS(M)$, if $p\neq q$, then there is $A \subseteq |M|$ of cardinality less than $\kappa$ such that $p\restriction_A \neq q\restriction_A$.
\end{enumerate}

\end{mydef}

\begin{mydef}\label{local}\
\begin{enumerate}
    \item $\K$ is \emph{$(\kappa,\lambda)$-local} if for every $M \in \K_\lambda$, every increasing continuous chain $\langle M_i : i<\kappa \rangle$ such that  $M=\bigcup_{i<\kappa}M_i$ and every $p,q\in \gS(M)$, if $p\restriction_{M_i}=q\restriction_{M_i}$ for all $i < \kappa$ then $p=q$.
    \item $\K$ is \emph{$(< \kappa, \lambda)$-local} if $\K$ is $(\mu,\lambda)$-local for all $\mu<\kappa$.
\end{enumerate}

\end{mydef}

Below are some relations between tameness and locality.
\begin{prop} Let $\lambda \geq \LS(\K)$.
\begin{enumerate}
\item If $\K$ is $(< \aleph_0, \lambda)$-tame, then $\K$ is $(< \lambda^+, \lambda)$-local.
\item Assume $\lambda > \LS(\K)$. If $\K$ is $( \lambda, \lambda)$-local, then $\K$ is $(<\lambda, \lambda)$-tame.
\item  If $\K$ is $( \mu , \mu)$-local for every $\mu \leq \lambda$, then $\K$ is $(\LS(\K), \mu)$-tame for every $\mu \leq \lambda$.
\item Assume $\lambda\geq \kappa$, $\cf (\kappa) >\chi$. If $\K$ is $(\chi,\lambda)$-tame, then $\K$ is  $(\kappa,\lambda)$-local.
\end{enumerate}
\end{prop}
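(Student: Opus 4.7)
Each of the four items is a short manipulation of the definitions. The pattern is that tameness-to-locality arguments (items (1) and (4)) bound a small ``difference set'' inside some link of the given chain, while locality-to-tameness arguments (items (2) and (3)) build a L\"owenheim--Skolem chain of submodels and read the witness off it. I would tackle them in the order (1), (4), (2), (3).

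For (1) and (4), fix $p \neq q$ in $\gS(M)$. In (1), $(<\aleph_0, \lambda)$-tameness furnishes a finite $A \subseteq |M|$ with $p\rest A \neq q\rest A$; for any continuous increasing chain $\langle M_i : i < \mu\rangle$ with $\mu < \lambda^+$ and union $M$, $A$ lies inside some $M_i$ (trivially if $\mu$ is finite, since then $M = M_{\mu-1}$; and by boundedness of the finitely many ``arrival times'' $\min\{i : a \in |M_i|\}$ if $\mu \geq \aleph_0$), contradicting agreement of $p, q$ on every link. For (4) the same argument applies, only with $|A| = \chi$; since $\cf(\kappa) > \chi$, the $\leq \chi$ arrival times of elements of $A$ in the $\kappa$-indexed chain cannot be cofinal, hence are bounded, so $A$ sits inside some $M_i$.

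For (2), assuming $\lambda > \LS(\K)$ and $(\lambda,\lambda)$-locality, fix $M \in \K_\lambda$ and distinct $p, q \in \gS(M)$. Enumerate $|M| = \{a_\alpha : \alpha < \lambda\}$ and use the L\"owenheim--Skolem axiom to build a continuous $\lea$-increasing chain $\langle M_i : i < \lambda\rangle$ of submodels of $M$ with $a_i \in |M_i|$ and $\|M_i\| = \LS(\K) + |i| < \lambda$; then $M = \bigcup_i M_i$, and $(\lambda,\lambda)$-locality supplies some $i$ with $p\rest{M_i} \neq q\rest{M_i}$, so $A := |M_i|$ is a subset of $M$ of cardinality $< \lambda$ on which $p$ and $q$ disagree (using transitivity of restriction for Galois types, which is immediate from the definition of $E$). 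Item (3) is essentially the same construction wrapped in an induction on $\mu \leq \lambda$: the base case $\mu = \LS(\K)$ is trivial (take $A = |M|$, which already has cardinality $\LS(\K)$), and in the inductive step with $\mu > \LS(\K)$ we build the chain as in (2), use $(\mu,\mu)$-locality to drop to some $M_i$ of cardinality $<\mu$, and then apply the inductive hypothesis to extract $A \subseteq |M_i|$ of cardinality $\LS(\K)$ witnessing $p\rest A \neq q\rest A$.

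I do not expect any real obstacle. The only technical points are the transitivity $(p\rest{M_i})\rest A = p\rest A$ for $A \subseteq |M_i|$, which is a one-line unwinding of the $E$-equivalence, and the standard Löwenheim--Skolem construction of the chains in (2) and (3), which goes through even when $\mu$ is singular because the links are only required to lie in $\K$ (with no constraint stronger than $\|M_i\| < \mu$).
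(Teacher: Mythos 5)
Your proposal is correct and takes essentially the same route as the paper: the paper proves (2) by the very same L\"owenheim--Skolem chain argument (phrased contrapositively, i.e.\ agreement on all small sets forces agreement on every link and hence equality by locality), declares (1) straightforward, says (3) is ``similar to (2)'', and simply cites Baldwin--Shelah for (4), for which you supply the standard bounded-arrival-times cofinality argument. No gaps.
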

\begin{proof}\
\begin{enumerate}
\item Straightforward.
\item Let $M \in \K_\lambda$ and $p, q \in \gS (M)$ such that $p\restriction_A = q\restriction_A$ for every $A \subseteq |M|$ with $|A| < \lambda$. Let $\langle M_i : i < \lambda \rangle$ be an increasing continuous chain such that $M=\bigcup_{i<\lambda}M_i$ and $\| M_i \| \leq \LS(\K) + |i|$ for every $i < \lambda$. Since  $\|M_i\| < \lambda$ for every $i < \lambda$, $p\restriction_{M_i} = q\restriction_{M_i}$ for every $i < \lambda$. Therefore, $p = q$ as $\K$ is $(\lambda, \lambda)$-local.
\item Similar to (2), see also \cite[1.18]{bales}.
\item This is  \cite[1.11]{bsh}
\end{enumerate} \end{proof}

\begin{remark} Universal classes are $(<\aleph_0, \lambda)$-tame for every $\lambda \geq \LS(\K)$ \cite[3.7]{vaseyd}, Quasiminimal AECs (in the sense of \cite{vaseyq}) are $(<\aleph_0, \lambda)$-tame for every $\lambda \geq \LS(\K)$ \cite[4.18]{vaseyq} and many natural AECs of modules are $(<\aleph_0, \lambda)$-tame for every $\lambda \geq \LS(\K)$ (see for example \cite[\S 3]{maz2}). The main results of this paper assume that the AEC is $(<\lambda^+, \lambda)$-local, so they apply to all of these classes. 

On the other hand there are AECs which are not $(\aleph_1, \aleph_1)$-local \cite{bsh} and which are not tame \cite{untame}.
\end{remark}

A natural question we were unable to answer is the following:
\begin{question}\label{q1}
If $\K$ is $( \aleph_0, \aleph_0)$-local, is $\K$ $(< \aleph_0, \aleph_0)$-tame?
\end{question}



Recall that if $\langle M_i : i<\omega\rangle$ is an increasing chain, then any increasing sequence of types $\langle p_i\in \gS(M_i) :  i<\omega\rangle$ has an upper bound. For longer sequences of types an upper bound might not exist.  A sufficient condition for a sequence to have an upper bounds is coherence (see for example \cite[3.14]{m1} for the definition). 

\begin{fact}\label{coh}
Let $\delta$ be a limit ordinal and $\langle M_i :  i\leq\delta\rangle$ be an increasing continuous chain. If $\langle p_i\in \gS^{na}(M_i) : i<\delta\rangle$ is a coherent sequence of types, then there is $p\in \gS^{na}(M_\delta)$ such that $p \geq p_i$ for every $i < \delta$ and $\langle p_i\in \gS^{na}(M_i) : i < \delta + 1\rangle$ is coherent. 
 \end{fact}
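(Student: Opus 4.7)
The plan is to exploit the coherence data to build a direct system of models and then take its direct limit inside $\K$.

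Unfolding the definition of coherence, the sequence $\langle p_i : i<\delta\rangle$ comes equipped with models $N_i$, elements $a_i \in |N_i|$ realizing $p_i$ with $M_i \lea N_i$, and $\K$-embeddings $f_{i,j}: N_i \to N_j$ for $i \leq j <\delta$ satisfying $f_{i,j}\restriction M_i = \mathrm{id}_{M_i}$, $f_{i,j}(a_i)=a_j$, and $f_{j,k}\circ f_{i,j} = f_{i,k}$. This directed system has a direct limit $N_\delta \in \K$ together with $\K$-embeddings $f_{i,\delta}: N_i \to N_\delta$ by the Tarski--Vaught chain axioms applied to the system transferred via the $f_{i,j}$'s. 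Since each $f_{i,\delta}$ fixes $M_i$ pointwise and $M_\delta = \bigcup_{i<\delta} M_i$, the induced map on $M_\delta$ is the identity, so $M_\delta \lea N_\delta$.

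Set $a_\delta := f_{i,\delta}(a_i)$, which is independent of $i$ by coherence, and let $p := \gtp(a_\delta/M_\delta, N_\delta)$. For any $i<\delta$, the embedding $f_{i,\delta}: N_i \to N_\delta$ (which fixes $M_i$ and sends $a_i$ to $a_\delta$) directly witnesses $p\restriction M_i = \gtp(a_i/M_i, N_i) = p_i$, so $p \geq p_i$. To see $p \in \gS^{na}(M_\delta)$, suppose towards contradiction that $a_\delta \in M_j$ for some $j<\delta$. Then $f_{j,\delta}(a_j) = a_\delta \in M_j$, and since $f_{j,\delta}$ is an embedding fixing $M_j$ pointwise, injectivity forces $a_j \in M_j$, contradicting $p_j \in \gS^{na}(M_j)$.

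Finally, to see that the extended sequence $\langle p_i : i < \delta+1\rangle$ (with $p_\delta := p$) is still coherent, I would augment the witness data by $(N_\delta, a_\delta, \langle f_{i,\delta}\rangle_{i<\delta})$; the required commutation of the extended diagrams is immediate from the universal property of the direct limit. The only delicate point will be matching the bookkeeping in the precise definition of coherence from \cite[3.14]{m1}, but modulo that verification the argument is the standard direct-limit construction, so I do not expect a substantive obstacle.
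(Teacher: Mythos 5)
Your proposal is correct: the paper states this as a \emph{Fact} and gives no proof of its own, deferring to the literature (the definition of coherence and this upper-bound property are attributed to \cite[3.14]{m1}), and your direct-limit construction --- realizing the coherent system $(N_i, a_i, f_{j,i})$ as a colimit $N_\delta$ via the Tarski--Vaught axioms, setting $p=\gtp(f_{i,\delta}(a_i)/M_\delta,N_\delta)$, and checking non-algebraicity and coherence of the extended sequence --- is exactly the standard argument behind that citation. The details you check (well-definedness of $a_\delta$, $M_\delta\lea N_\delta$ by smoothness, and the injectivity argument for non-algebraicity) are the right ones, so there is no gap.
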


\section{Main results}

In this section we prove the main results of the paper. Throughout this section $\K$ is always an abstract elementary class and $\lambda$ is always a cardinal greater than or equal to the Löwenheim-Skolem-Tarski number of $\K$.  We begin by recalling the following notions that appear in  \cite{wy} and \cite[\S VI]{shelahaecbook}.

\begin{mydef}\
\begin{itemize}
\item $p=\type(a/M,N)$ has the \emph{$\lambda$-extension property} if for every $M'\in \K_\lambda$ $\lea$-extending $M$, there is $q\in \gS^{na}(M')$ extending $p$. In this case we say $p\in \gS^{\lambda-ext}(M)$.\footnote{These types are also called \emph{big types} in the literature, see for example \cite{sh48} and \cite{les}. }
\item  $p=\type(a/M,N)$ is \emph{$\lambda$-algebraic} if $p \in \gS^{na}(M)-\gS^{\lambda-ext}(M)$. Let   $\gS^{\lambda-al}(M)$ denote the $\lambda$-algebraic types over $M$.
\end{itemize}

\end{mydef}

Observe that if $p$ has the $\lambda$-extension property and $dom(p) \in \K_\lambda$ then $p$ is non-algebraic. 

Recall that an AEC $\K$ is \emph{stable in $\lambda$} if $| \gS(M)| \leq \lambda$ for every $M \in \K_\lambda$. We introduce a weakening of stability.

\begin{mydef}
    $\K$ is stable for $\lambda$-algebraic types in $\lambda$ if for all $M\in \K_\lambda$, $|\gS^{\lambda-al}(M)|\leq \lambda$.
\end{mydef}

Recall that an AEC $\K$ has disjoint amalgamation in $\lambda$  if for any $M, N_1, N_2 \in \K_\lambda$, if  $M \lea N_1, N_2$ and  $N_1 \cap N_2 = M$ then there are $N \in \K_\lambda$, $f_1: N_1 \to N$ and $f_2: N_2 \to N$ such that $f_1\rest_M= f_2\rest_M$ and $f_1[N_1] \cap f_2[N_2] = f_1[M](= f_2[M])$.  There are many AECs with disjoint amalgamation, see for example \cite[2.2]{baldwine},  \cite{disj}, \cite{disj2} and \cite[2.10]{mj}. 

\begin{prop}\label{p-disj}
If $\K$ has disjoint amalgamation in $\lambda$, then $\gS^{\lambda-al}(M) = \emptyset$ for every $M \in \K_\lambda$. In particular, $\K$ is stable for $\lambda$-algebraic types in $\lambda$.
\end{prop}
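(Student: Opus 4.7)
The plan is to show directly that every non-algebraic type over a model in $\K_\lambda$ has the $\lambda$-extension property; the \emph{in particular} clause then follows trivially since $|\emptyset|=0\leq\lambda$.

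Fix $M\in \K_\lambda$ and $p=\type(a/M,N)\in \gS^{na}(M)$. By the L\"owenheim--Skolem--Tarski axiom I may assume $N\in \K_\lambda$, and by definition of $\gS^{na}$ we have $a\in |N|\setminus|M|$. Now let $M'\in \K_\lambda$ with $M\lea M'$; the goal is to produce $q\in \gS^{na}(M')$ with $q\geq p$. By renaming the elements of $|N|\setminus|M|$ (which does not change the Galois type $p$) I may assume that $|N|\cap|M'|=|M|$.

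At this point disjoint amalgamation in $\lambda$ applies to $M\lea N$ and $M\lea M'$, producing $N^*\in \K_\lambda$ together with $\K$-embeddings $f_1:N\to N^*$ and $f_2:M'\to N^*$ such that $f_1\rest_M=f_2\rest_M$ and $f_1[N]\cap f_2[M']=f_1[M]=f_2[M]$. Up to post-composing with an isomorphism I may assume $f_2=\operatorname{id}_{M'}$, so that $M'\lea N^*$, $f_1\rest_M=\operatorname{id}_M$, and $f_1[N]\cap |M'|=|M|$. Define
\[
q:=\type(f_1(a)/M',N^*)\in \gS(M').
\]
Then $q\geq p$: since $f_1:N\to N^*$ is a $\K$-embedding fixing $M$ with $f_1(a)=f_1(a)$, the equivalence witnessing $E_{\text{at}}$ gives $\type(f_1(a)/M,N^*)=\type(a/M,N)=p$, i.e.\ $q\rest_M=p$.

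It remains to verify that $q$ is non-algebraic, which is exactly where disjoint amalgamation pays off. Suppose toward a contradiction that $f_1(a)\in |M'|$. Since $f_1(a)\in f_1[N]$, the disjointness condition forces $f_1(a)\in f_1[N]\cap|M'|=|M|$, hence $a\in f_1^{-1}[|M|]=|M|$ (as $f_1\rest_M=\operatorname{id}_M$ and $f_1$ is injective), contradicting $a\notin|M|$. Thus $f_1(a)\notin|M'|$, so $q\in \gS^{na}(M')$, which shows $p\in \gS^{\lambda\text{-ext}}(M)$. Since $p\in \gS^{na}(M)$ was arbitrary, $\gS^{\lambda\text{-al}}(M)=\gS^{na}(M)\setminus \gS^{\lambda\text{-ext}}(M)=\emptyset$, and the stability clause is immediate. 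There is no genuine obstacle here; the only point requiring care is the preliminary renaming that sets up the disjoint-amalgamation hypothesis $|N|\cap|M'|=|M|$.
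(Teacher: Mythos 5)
Your proof is correct and follows essentially the same route as the paper's: arrange $|N|\cap|M'|=|M|$ by renaming, apply disjoint amalgamation over $M$, and use the disjointness clause to see that the image of $a$ lands outside $|M'|$. The only cosmetic difference is that you rename $N$ while the paper renames $M'$ (and then transfers back at the end); both are the standard renaming device and the arguments are interchangeable.
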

\begin{proof}
Let $M \in \K_\lambda$. Suppose $p=\type(a/M,N)$ for $a\in |N|-|M|$. Let $M' \in \K_\lambda$ with $M'\geq_\K M$. Let $M'' \in \K_\lambda$ and $f: M' \cong M''$ such that $M'' \cap N = M$, $M\lea M''$ and and $f\rest_M= id_M$. Amalgamate $M \lea M'', N$  such that for some $g$ the following commutes: 
\begin{equation*}
    \begin{tikzcd}
        M'' \ar{r}{id}& L\\
        M \ar{r}{id} \ar{u}{id}& N\ar{u}{g}.
    \end{tikzcd}
\end{equation*}

and $g[N]\cap M''=M$. This is possible by disjoint amalgamation in $\lambda$.

Let $L' \in \K_\lambda$ and  $h: L' \cong L$  such that $M' \lea L'$ and $h\rest_{M'} = f\rest_{M'}$. 

Let  $q= \type(h^{-1}(g(a))/M', L')$. It is straightforward to show that $q$ extends $p$ and $q$ is a non-algebraic type because $g(a) \notin |M''|$ as  $g[N]\cap M''=M$.
\end{proof}

\begin{remark}\label{unsta}
The previous result shows that stability for $\lambda$-algebraic types in $\lambda$ is strictly weaker than stability in $\lambda$ as  AECs axiomatizable by a complete first-order theory have disjoint amalgamation (in all cardinals).

\end{remark}

A natural question, suggested by the referee, we were unable to answer is the following:

\begin{question}\label{q-r}
Find an (natural) example of an AEC that is stable for $\lambda$-algebraic types in $\lambda$ but such that $\gS^{\lambda-al}(M) \neq \emptyset$ for some $M \in \K_\lambda$. 
\end{question} 
We begin working towards proving the main results of the paper.

\begin{lem} \label{many-r} Assume that $\K$ has amalgamation in $\lambda$,  no maximal model in $\lambda$ and is stable for $\lambda$-algebraic types in $\lambda$. Let $p \in \gS(M)$ and $M \in \K_\lambda$. The following are equivalent.
\begin{enumerate}
    \item $p$ has the $\lambda$-extension property. 
    \item $p$ has $\geq \lambda^+$ realizations in some $M' \in \K$ such that  $M \lea M'$.
    
\end{enumerate}
Moreover if $\K$ is stable for $\lambda$-algebraic types in $\lambda$, the conditions above are equivalent to:
\begin{enumerate}
  \setcounter{enumi}{2}
\item $p$ has an extension to a type with the $\lambda$-extension property for every  $M' \in \K_\lambda$ a $\lea$-extension  of $M$. 

\end{enumerate}
Moreover if $\K$ is stable in $\lambda$, the conditions above are equivalent to:
\begin{enumerate}
  \setcounter{enumi}{3}
  \item $p$ has an extension to a non-algebraic type to some $M'$ universal extension of $M$.\footnote{Recall that $M'$ is universal over $M$ if for any $N \in \K_{\lambda}$ such that
$M \leq_\K N$, there is $f: N \to M'$ a $\K$-embedding with $f\rest_M = id_M$.}
\end{enumerate}

\end{lem}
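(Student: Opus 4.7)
The plan is to prove the equivalences in order, treating $(1)\Leftrightarrow(2)$ as the base case and adding $(3)$ and $(4)$ as the hypotheses strengthen.

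For $(1)\Rightarrow(2)$, I would build an increasing continuous chain $\langle M_\alpha : \alpha<\lambda^+\rangle$ in $\K_\lambda$ with $M_0=M$: at each successor step the $\lambda$-extension property produces a non-algebraic extension $p_\alpha\in\gS^{na}(M_\alpha)$ of $p$, which by amalgamation in $\lambda$ can be realized by some $a_\alpha\in M_{\alpha+1}\setminus|M_\alpha|$ with $M_{\alpha+1}\in\K_\lambda$; at limit stages of cofinality at most $\lambda$ the union stays in $\K_\lambda$, so the construction carries through. The elements $\{a_\alpha:\alpha<\lambda^+\}$ are pairwise distinct (for $\beta<\alpha$, $a_\beta\in M_{\beta+1}\lea M_\alpha$ while $a_\alpha\notin|M_\alpha|$) and all realize $p$ in $M'=\bigcup_\alpha M_\alpha$. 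For $(2)\Rightarrow(1)$, given any extension $M''\in\K_\lambda$ of $M$, amalgamate $M''$ with the model from $(2)$ over $M$ into some $N$; since $|M''|=\lambda$ while $N$ contains $\lambda^+$ realizations of $p$, some realization lies outside $M''$, producing a non-algebraic extension of $p$ to $M''$.

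The direction $(3)\Rightarrow(1)$ is immediate, because a type with the $\lambda$-extension property is non-algebraic and its $\lambda$-extension property restricts down to $p$. For $(1)\Rightarrow(3)$, fix an extension $M'\in\K_\lambda$ of $M$, apply $(1)\Rightarrow(2)$ to obtain $\lambda^+$ realizations of $p$ in some ambient model, and amalgamate with $M'$ over $M$. Each realization induces a type over $M'$ extending $p$. If no such type had the $\lambda$-extension property, every one would be algebraic or $\lambda$-algebraic: algebraic types account for at most $|M'|=\lambda$ realizations in total, and by the contrapositive of $(1)\Leftrightarrow(2)$ each $\lambda$-algebraic type has at most $\lambda$ realizations in any single extension, so stability for $\lambda$-algebraic types in $\lambda$ caps the total contribution of $\lambda$-algebraic types at $\lambda\cdot\lambda=\lambda$—contradicting the presence of $\lambda^+$ realizations.

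For the equivalence with $(4)$ under the additional hypothesis of stability in $\lambda$, a universal extension $M'\in\K_\lambda$ of $M$ exists by the standard construction, so $(3)$ immediately supplies a non-algebraic extension of $p$ over $M'$, giving $(4)$. Conversely, given $q=\type(a/M',N)$ as in $(4)$ and any $M''\in\K_\lambda$ extending $M$, universality of $M'$ yields an embedding $f:M''\to M'$ fixing $M$ pointwise; then $\type(a/f[M''],N)$ is non-algebraic as $a\notin|M'|$, and transferring back along $f$ produces a non-algebraic extension of $p$ to $M''$, witnessing $(1)$. The principal subtlety is the counting step in $(1)\Rightarrow(3)$: it relies essentially on the already-established $(1)\Leftrightarrow(2)$ together with the bounded number of $\lambda$-algebraic types, so the order of proof matters; everything else is a routine chain-and-amalgamation exercise.
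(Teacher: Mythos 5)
Your proposal is correct and follows essentially the same route as the paper: the heart of the lemma, getting (3) from (1)/(2), is the same partition-and-count argument (split the $\lambda^+$ realizations over $M'$ by their types, bound the algebraic and $\lambda$-algebraic contributions by $\lambda$ using stability for $\lambda$-algebraic types together with the $(1)\Leftrightarrow(2)$ equivalence). The only differences are cosmetic: you prove $(1)\Leftrightarrow(2)$ directly where the paper cites it from Shelah, and you close the last equivalence via $(4)\Rightarrow(1)$ using universality of $M'$ rather than the paper's $(4)\Rightarrow(2)$ chain construction --- both work, and both your argument and the paper's leave implicit the routine step of amalgamating a $\lambda^+$-sized model with a $\lambda$-sized one over a $\lambda$-sized base using only amalgamation in $\lambda$.
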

\begin{proof}
The equivalence between (1) and (2) appears in \cite[2.9]{sh576} (see also \cite[2.2]{bales}).

Assume $\K$  is stable for $\lambda$-algebraic types in $\lambda$. (3) implies (1) is clear, so we only need to show (2) implies (3). 

 Let $M' \in \K_\lambda$ with $M \lea M'$. By (2) there are  $N \geq_\K M$ and $\{a_i\in |N| :  i<\lambda^+\}$ distinct realizations of $p:= \type(a/M, N)$. Using amalgamation in $\lambda$ we may assume that $M' \lea N$. Moreover, we may assume without loss of generality that for all $i<\lambda^+$, $a_i\notin |M'|$. If not, subtract those $a_i$ that are in $M'$. Observe that $\type(a_i/M',N)\geq p$ and $\type(a_i/M',N) \in \gS^{na}(M')$ for all $i<\lambda^+$. If $|\{\type(a_i/M', N) :  i<\lambda^+\}|=\lambda^+$, it follows from stability for $\lambda$-algebraic types in $\lambda$ that for some $i < \lambda^+$, $\type(a_i/M', N) \in\gS^{\lambda-ext}(M)$.  Otherwise $| \{\type(a_i/M', N) :  i<\lambda^+\}|\leq \lambda$. Let $\Phi: \lambda^+ \to \{\type(a_i/M', N) :  i<\lambda^+\}$ be given by $i\mapsto \type(a_i/M',N)$. Since $|\{\type(a_i/M', N) :  i<\lambda^+\}|\leq \lambda$, by the pigeonhole principle there is $q\in \{\type(a_i/M', N) :  i<\lambda^+\}$ such that $|\{ i <\lambda^+: \Phi(i)=q\}| \geq \lambda^+$. That is, $q$ has $\lambda^+$-many realizations in $N$. Hence $q$ has the the $\lambda$-extension property by the equivalence between (1) and (2) for $q$.

Assume $\K$ is stable in $\lambda$. (1) implies (4) is clear as universal extensions exists by stability in $\lambda$, so we only need to show (4) implies (2).

Assume that $p:=\type(a/M,N)$ has an extension to $q = \type(b/M',N') \in \gS^{na}(M')$ for a universal extension $M'$ of $M$. We build $\langle M_i: i<\lambda^+\rangle $ increasing continuous such that $M_0 = M$ and for all $i$, there is $a_i\in |M_{i+1}|-|M_i|$ realizing $\type(a/M,N)$. If we can carry out this construction we are done as $\bigcup_{i < \lambda^+} M_i$ has $\lambda^+$ realizations of $\type(a/M,N)$. The base step and limit steps are clear so we only need to do the successor step. Since $M'$ is universal over $M$, there is $f:M_i \to M'$ with $f\rest_M = id_M$. Let $M_{i+1} \in \K_\lambda$ and  $g: M_{i+1} \cong N'$  such that $M_i \lea M_{i+1}$ and $g\rest_{M_i} = f\rest_{M_i}$. Let $a_{i+1} = g^{-1}(b)$. It is straightforward to show that $a_{i+1}$ is as required.  \end{proof}


We show that there are types with the $\lambda$-extension property.

\begin{lem}\label{ex-ext} Assume that $\K$ has amalgamation in $\lambda$ and no maximal model in $\lambda$. If  $\K$ is stable for $\lambda$-algebraic types in $\lambda$, then there is $p\in \gS^{\lambda-ext}(M)$ for every $M \in \K_\lambda$. 
\end{lem}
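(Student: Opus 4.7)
The plan is to prove the statement by contradiction. Suppose every $p \in \gS^{na}(M)$ fails the $\lambda$-extension property, so $\gS^{na}(M) = \gS^{\lambda-al}(M)$. Stability for $\lambda$-algebraic types in $\lambda$ then gives $|\gS^{na}(M)| \leq \lambda$. On the other hand, the equivalence of (1) and (2) in Lemma \ref{many-r} (which only requires amalgamation in $\lambda$ and no maximal model in $\lambda$) tells us that every such $\lambda$-algebraic $p$ has at most $\lambda$ realizations in any $\lea$-extension of $M$, since otherwise $p$ would satisfy condition (2) and hence (1).

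Next I would manufacture a $\lea$-extension of $M$ of cardinality $\lambda^+$. Using no maximal models in $\lambda$ at successor steps and taking unions at limits, build a strictly $\lea$-increasing continuous chain $\langle M_i : i < \lambda^+ \rangle$ with $M_0 = M$ and each $M_i \in \K_\lambda$. Setting $N := \bigcup_{i < \lambda^+} M_i$ yields $M \lea N$ with $\|N\| = \lambda^+$, because the chain produces at least one new element at each successor step, giving $\lambda^+$ pairwise distinct witnesses in $N$.

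The contradiction now comes from counting. Every $a \in |N| \setminus |M|$ determines a type $\gtp(a/M, N) \in \gS^{na}(M)$, and by the previous paragraph each type in $\gS^{na}(M)$ has at most $\lambda$ realizations in $N$. Therefore
\[
\|N\| \;\leq\; \|M\| + |\gS^{na}(M)| \cdot \lambda \;\leq\; \lambda + \lambda \cdot \lambda \;=\; \lambda,
\]
contradicting $\|N\| = \lambda^+$. So some $p \in \gS^{na}(M)$ must in fact lie in $\gS^{\lambda-ext}(M)$.

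The proof has essentially no obstacle once Lemma \ref{many-r} is in hand; the entire point is to combine the two bounds it supplies. The mild subtlety is simply recognizing that the stability hypothesis is used only to bound $|\gS^{\lambda-al}(M)|$, while the per-type realization bound is purely a consequence of $(1)\Leftrightarrow(2)$ and amalgamation plus no maximal models — together they make $\gS^{na}(M)$ too small to account for the $\lambda^+$-many new elements that no maximal models in $\lambda$ produces.
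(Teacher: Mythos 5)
Your proof is correct and follows essentially the same route as the paper's: the paper also reduces to the dichotomy between $|\gS^{na}(M)|\geq\lambda^+$ (where stability for $\lambda$-algebraic types immediately yields a non-$\lambda$-algebraic type) and $|\gS^{na}(M)|\leq\lambda$ (where it invokes the same pigeonhole-on-realizations argument via the equivalence of (1) and (2) in Lemma \ref{many-r}, applied to a $\lambda^+$-sized extension built from no maximal models). Your version merely packages the two cases into a single counting contradiction; the ingredients are identical.
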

\begin{proof}
  Fix $M\in \K_\lambda$. There are two cases to consider. If $|\gS^{na}(M)|\geq \lambda^+$, the result  follows directly from the assumption that $\K$ is stable for $\lambda$-algebraic types in $\lambda$. If $|\gS^{na}(M)|\leq \lambda$, then a similar argument to that of (2) implies (3) of the previous lemma can be used to obtain the result. 
  \end{proof}

Recall the following notion. This notion was first introduced by Shelah in \cite[6.1]{sh48}, called minimal types there. Note that this is a different notion from the minimal types of \cite{sh576}. These types are also called \emph{quasiminimal types} in the literature, see for example \cite{les}.
 
\begin{mydef}\label{duni}
$p=\type(a/M,N)$ is a \emph{$\lambda$-unique type}
if
\begin{enumerate}
    \item $p=\type(a/M,N)$ has the $\lambda$-extension property. 
    \item For every  $M'\in \K_\lambda$ $\lea$-extending $M$, $p$ has at most one extension $q\in \gS^{\lambda-ext}(M')$.
\end{enumerate}

In this case we say that $p \in  \gS^{\lambda-unq}(M)$.
\end{mydef}

We show the existence of $\lambda$-unique types. The argument is standard (see for example \cite[2.15]{bales}), but we provide the details to show that the argument can be carried out in this setting.

\begin{lem}\label{l2} Assume that $\K$ has amalgamation in $\lambda$,  no maximal model in $\lambda$ and is stable for $\lambda$-algebraic types in $\lambda$.  If $|\gS^{na}(M)|<2^{\aleph_0}$ for every $M \in \K_\lambda$, then for every $M_0\in \K_\lambda$ and $p\in \gS^{\lambda-ext}(M_0)$, there is $M_1\in \K_\lambda$ and $q \in \gS^{\lambda-unq}(M_1)$ such that $M_0 \lea M_1$ and $q$ extends $p$.

\end{lem}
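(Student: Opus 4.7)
I would prove the contrapositive via a standard binary-tree construction: assuming no such $M_1$ and $q$ exist, I build $2^{\aleph_0}$ pairwise distinct non-algebraic types over a single model in $\K_\lambda$, contradicting the hypothesis $|\gS^{na}(M)|<2^{\aleph_0}$. Concretely, the failure of the conclusion for $p$ over $M_0$ unpacks to the following splitting hypothesis: for every $M'\in \K_\lambda$ with $M_0\lea M'$ and every $q\in \gS^{\lambda-ext}(M')$ extending $p$, there exists $N\in \K_\lambda$ with $M'\lea N$ such that $q$ admits two distinct extensions in $\gS^{\lambda-ext}(N)$. Indeed, if no such splitting exists for some $(M',q)$, then $q$ itself witnesses the conclusion.

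Using this, I would construct by induction on $n<\omega$ an increasing continuous chain $\langle M_n^* : n<\omega\rangle \subseteq \K_\lambda$ with $M_0^*=M_0$, together with types $\langle p_\eta : \eta \in 2^{<\omega}\rangle$ such that $p_\emptyset = p$, $p_\eta \in \gS^{\lambda-ext}(M_{|\eta|}^*)$, $p_\eta \leq p_\nu$ whenever $\eta \subseteq \nu$, and $p_{\eta^\frown 0} \neq p_{\eta^\frown 1}$ for every $\eta$. At the successor step, for each of the finitely many $\eta \in 2^n$, apply the splitting hypothesis to $p_\eta$ to obtain $N_\eta \in \K_\lambda$ with $M_n^* \lea N_\eta$ and two distinct $\lambda$-extensions $q_{\eta,0}, q_{\eta,1} \in \gS^{\lambda-ext}(N_\eta)$ of $p_\eta$. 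Iterated amalgamation in $\lambda$ over $M_n^*$ yields a single $M_{n+1}^* \in \K_\lambda$ with $N_\eta \lea M_{n+1}^*$ for every $\eta \in 2^n$. By Lemma \ref{many-r}(3), which is available because $\K$ is stable for $\lambda$-algebraic types in $\lambda$, each $q_{\eta,i}$ extends to some $p_{\eta^\frown i} \in \gS^{\lambda-ext}(M_{n+1}^*)$; the types $p_{\eta^\frown 0}$ and $p_{\eta^\frown 1}$ are distinct because their restrictions to $N_\eta$ are already distinct.

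After $\omega$ stages, set $M^* := \bigcup_{n<\omega} M_n^* \in \K_\lambda$. For each branch $\eta \in 2^\omega$, a routine induction (choose a realization of $p_{\eta\rest 0}$ and, using that $p_{\eta\rest(n+1)}$ restricts to $p_{\eta\rest n}$, amalgamate to extend a realization of $p_{\eta\rest n}$ to a realization of $p_{\eta\rest(n+1)}$) produces a coherent sequence for $\langle p_{\eta\rest n} : n<\omega\rangle$; each $p_{\eta\rest n}$ is non-algebraic since it has the $\lambda$-extension property and its domain lies in $\K_\lambda$. Fact \ref{coh} then yields $p_\eta^\infty \in \gS^{na}(M^*)$ extending every $p_{\eta\rest n}$. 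For distinct $\eta,\nu \in 2^\omega$, if $n$ is least with $\eta(n)\neq \nu(n)$, then $p_\eta^\infty$ and $p_\nu^\infty$ restrict to the distinct types $p_{\eta\rest(n+1)}$ and $p_{\nu\rest(n+1)}$ over $M_{n+1}^*$, so $p_\eta^\infty \neq p_\nu^\infty$. Therefore $|\gS^{na}(M^*)| \geq 2^{\aleph_0}$, a contradiction.

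The main obstacle is the successor step of the tree construction: one must simultaneously use the splitting produced by the failure hypothesis and the $\lambda$-extension-preserving lift from Lemma \ref{many-r}(3) to land both branches in $\gS^{\lambda-ext}(M_{n+1}^*)$ without collapsing them. Once the tree is built, the limit argument and the cardinality contradiction are entirely routine.
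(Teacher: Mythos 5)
Your proposal is correct and follows essentially the same route as the paper's proof: the failure of the conclusion is unpacked into a splitting property for $\lambda$-extension types above $p$, a binary tree $\langle p_\eta : \eta\in 2^{<\omega}\rangle$ over an $\omega$-chain is built using amalgamation and the $(1)\Rightarrow(3)$ direction of Lemma \ref{many-r} to keep all nodes in $\gS^{\lambda-ext}$, and Fact \ref{coh} applied to each branch yields $2^{\aleph_0}$ distinct non-algebraic types over the union, contradicting the hypothesis. The only cosmetic difference is that you realize the models $N_\eta$ inside $M_{n+1}^*$ directly rather than via explicit embeddings $f_\eta$, and you spell out the coherence of the $\omega$-sequences, which the paper treats as automatic.
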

\begin{proof}
    Assume that $|\gS^{na}(M)|<2^{\aleph_0}$ for every $M \in \K_\lambda$ and assume for the sake of contradiction that the conclusion fails. Then there is $M_0\in \K_\lambda$ and $p\in \gS^{\lambda-ext}(M_0)$ without a $\lambda$-unique type above it.
    
    We build $\langle M_n : n <\omega\rangle$ and $\langle p_\eta: \eta\in 2^{<\omega} \rangle $ by induction such that:
    \begin{enumerate}
        \item $p_{\langle\rangle}=p$;
        \item for every $\eta \in 2^{<\omega}$, $p_\eta\in \gS^{\lambda-ext}(M_{\ell(\eta)})$;
        \item for every $\eta \in 2^{<\omega}$, $p_{\eta\concat 0}\neq p_{\eta\concat 1}$. \label{distinct_types}
        
    \end{enumerate}
    
    \fbox{Construction} The base step is given so we do the induction step. By induction hypothesis we have $\langle p_\eta \in \gS^{\lambda-ext}(M_n) : \eta \in 2^n \rangle$.  Since there is no $\lambda$-unique type above $p_{\langle\rangle}$ and by Lemma \ref{many-r}, for every $\eta  \in 2^n $ there are $N_\eta \in \K_\lambda$ and $q_\eta^0$, $q_\eta^1 \in \gS^{\lambda-ext}(N_\eta)$ such that $q_\eta^0$, $q_\eta^1 \geq p_\eta$ and $q_\eta^0 \neq q_\eta^1$. 
    
   Using amalgamation in $\lambda$  we build  $M_{n+1} \in \K_\lambda$ and $\langle f_\eta: N_\eta \xrightarrow[M_n]{} M_{n+1} : \eta \in 2^n \rangle$. Now for every $\eta \in 2^n$, let $p_{\eta \concat 0}, p_{\eta \concat 1} \in \gS^{\lambda-ext}(M_{n+1})$ such that $ p_{\eta \concat 0} \geq f_\eta(q_\eta^0)$ and $ p_{\eta \concat 1} \geq f_\eta(q_\eta^1)$. These exist by Lemma \ref{many-r}. It is easy to show that $M_{n+1}$ and $\langle p_{\eta\concat \ell} : \eta \in 2^n, \ell \in \{0, 1\} \rangle$  are as required.

\fbox{Enough} Let $N:=\bigcup_{n <\omega} M_n\in \K_\lambda$. For every $\eta \in  2^\omega$, let $p_\eta \in \gS^{na}(N)$ be an upper bound of $\langle p_{\eta\restriction_n} :  n<\omega\rangle$  given by Fact \ref{coh}. Observe that if $\eta\neq \nu\in 2^\omega $, $p_\eta\neq p_\nu$. Then $|\gS^{na}(N)|  \geq 2^{\aleph_0}$ which contradicts our assumption.
\end{proof}
\begin{remark}\label{r1}
    If $M\lea N$, $p\in \gS^{\lambda-unq}(M)$, $q \in \gS^{\lambda-ext}(N)$ and $q\geq p$, then $q\in \gS^{\lambda-unq}(N)$.
\end{remark}

We are ready to prove one of the main results of the paper.

\begin{thm}\label{main-2-weak}

Assume that $\K$ has amalgamation in $\lambda$,  no maximal model in $\lambda$, and is stable for $\lambda$-algebraic types in $\lambda$.  If $|\gS^{na}(M)|<2^{\aleph_0}$ for every $M \in \K_\lambda$ and $\K$ is $(<\lambda^+, \lambda)$-local, then $\K$ has a model of cardinality $\lambda^{++}$.
    
\end{thm}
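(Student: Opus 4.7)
The plan is to extract a single ``anchor'' $\lambda$-unique type and then propagate it through every model in $\K_{\lambda^+}$ above its domain, so that a proper $\lea$-extension is always available and one can iterate $\lambda^{++}$ many times. First I would combine Lemma \ref{ex-ext} with Lemma \ref{l2} to fix $M_* \in \K_\lambda$ together with an anchor $p_* \in \gS^{\lambda-\text{unq}}(M_*)$.

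The main technical step is a ``propagation lemma'': for every $N \in \K_{\lambda^+}$ with $M_* \lea N$, there is $p \in \gS^{na}(N)$ extending $p_*$. I would fix a $\lea$-increasing continuous resolution $\langle N_i : i < \lambda^+\rangle$ of $N$ in $\K_\lambda$ with $M_* \lea N_0$, and build by induction a coherent chain $\langle p_i \in \gS^{\lambda-\text{unq}}(N_i) : i < \lambda^+\rangle$ with $p_0 \geq p_*$. The base step uses the $\lambda$-extension property of $p_*$ combined with Remark \ref{r1}. At successor $i+1$, Lemma \ref{many-r}(3) extends $p_i$ to $p_{i+1} \in \gS^{\lambda-\text{ext}}(N_{i+1})$ and Remark \ref{r1} upgrades it to $\lambda$-unique; coherent witnesses are arranged via amalgamation in $\lambda$. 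At a limit $i$, Fact \ref{coh} produces a coherent upper bound $p_i \in \gS^{na}(N_i)$, and the point where real work happens is verifying $p_i \in \gS^{\lambda-\text{unq}}(N_i)$. Finally, applying Fact \ref{coh} at $\delta = \lambda^+$ delivers the desired $p \in \gS^{na}(N)$.

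The limit-step verification is the main obstacle and is precisely where $(<\lambda^+,\lambda)$-locality enters. Given $N_i' \in \K_\lambda$ with $N_i \lea N_i'$, let $\kappa = \operatorname{cf}(i) < \lambda^+$ and pick a continuous cofinal sequence $\langle j_\alpha : \alpha < \kappa \rangle$ in $i$; using LST and amalgamation in $\lambda$ I would construct a $\lea$-increasing continuous chain $\langle N_{j_\alpha}' : \alpha < \kappa\rangle$ in $\K_\lambda$ with $N_{j_\alpha} \lea N_{j_\alpha}' \lea N_i'$ and union $N_i'$. For each $\alpha$, $\lambda$-uniqueness of $p_{j_\alpha}$ gives a unique $q_\alpha \in \gS^{\lambda-\text{ext}}(N_{j_\alpha}')$ extending $p_{j_\alpha}$; the equivalence (1)$\Leftrightarrow$(2) of Lemma \ref{many-r} shows that the restriction of a $\lambda$-extension is again a $\lambda$-extension, so uniqueness forces the $q_\alpha$'s to form a coherent increasing chain, and Fact \ref{coh} yields $q \in \gS^{na}(N_i')$. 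Since $q\restriction_{N_{j_\alpha}} = p_{j_\alpha} = p_i\restriction_{N_{j_\alpha}}$ for every $\alpha$, $(\kappa,\lambda)$-locality forces $q\restriction_{N_i} = p_i$, so $p_i$ has a non-algebraic extension to $N_i'$; that is, $p_i$ has the $\lambda$-extension property. The same refinement argument proves $\lambda$-uniqueness: any two $\lambda$-extensions of $p_i$ to $N_i'$ restrict, on each $N_{j_\alpha}'$, to the unique $\lambda$-extension of the $\lambda$-unique $p_{j_\alpha}$, hence agree there, and locality closes the argument.

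Given the propagation lemma, the model of size $\lambda^{++}$ is built by a routine iteration. First use amalgamation and no maximal model in $\lambda$ (iterating $\lambda^+$ many times in $\K_\lambda$) to produce $N^0 \in \K_{\lambda^+}$ with $M_* \lea N^0$. Then construct a $\lea$-increasing continuous $\langle N^\alpha : \alpha < \lambda^{++}\rangle$ in $\K_{\lambda^+}$: at successor $\alpha+1$, apply the propagation lemma to $N^\alpha$, realize the resulting $p \in \gS^{na}(N^\alpha)$ in some $\lea$-extension, and cut down via LST to a proper $\lea$-extension $N^{\alpha+1} \in \K_{\lambda^+}$; at limits (necessarily of cofinality at most $\lambda^+$) the union has cardinality $\lambda^+$ and so remains in $\K_{\lambda^+}$. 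Then $\bigcup_{\alpha < \lambda^{++}} N^\alpha$ is a model of cardinality $\lambda^{++}$, as required.
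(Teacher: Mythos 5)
Your overall architecture matches the paper's: extract one $\lambda$-unique anchor via Lemmas \ref{ex-ext} and \ref{l2}, run a coherent chain of $\lambda$-unique types along a resolution of a model of size $\lambda^+$, and use $(<\lambda^+,\lambda)$-locality at limits to keep the chain inside $\gS^{\lambda-ext}$. (The paper phrases the endgame as ``no maximal model in $\lambda^+$'' by contradiction rather than your direct iteration, but that is cosmetic.) Where you genuinely diverge is the limit-step verification. The paper's argument is shorter and stays over $N_i$ itself: since $p_0$ is $\lambda$-unique, Lemma \ref{many-r}(3) gives \emph{some} $q\in\gS^{\lambda-ext}(\bigcup_{j<i}N_j)$ extending $p_0$; for each $j<i$ both $q\restriction_{N_j}$ and $p_j$ lie in $\gS^{\lambda-ext}(N_j)$ and extend the $\lambda$-unique $p_0$, hence coincide, and locality then yields $q=p_i$ in one stroke. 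You instead test the extension property directly against an arbitrary $N_i'\in\K_\lambda$ by refining $N_i'$ into a second chain $\langle N_{j_\alpha}'\rangle$ and building types $q_\alpha$ over it. Your route works but costs an extra transfinite construction that the uniqueness of the \emph{base} type makes unnecessary.

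That extra construction is also where your write-up has a real gap: you assert that ``uniqueness forces the $q_\alpha$'s to form a coherent increasing chain.'' Uniqueness does give that the chain is \emph{increasing} ($q_{\alpha+1}\restriction_{N_{j_\alpha}'}$ is a $\lambda$-extension-property extension of $p_{j_\alpha}$, hence equals $q_\alpha$), but for chains of uncountable length increasing does not imply coherent --- that is precisely why Fact \ref{coh} carries coherence as a hypothesis. At a limit stage $\alpha$ of the inner chain you must check that the canonical coherent upper bound of $\langle q_\beta:\beta<\alpha\rangle$ equals your prescribed $q_\alpha$; this does not follow from uniqueness alone, since the upper bound is not yet known to have the $\lambda$-extension property. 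The repair is available with your own tools: the upper bound and $q_\alpha$ agree on every $N_{j_\beta}'$ for $\beta<\alpha$, so $(<\lambda^+,\lambda)$-locality applied to the continuous chain $\langle N_{j_\beta}':\beta<\alpha\rangle$ identifies them. With that patch (or, better, by switching to the paper's one-step comparison against an extension of $p_0$), your proof goes through.
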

\begin{proof}
It is enough to show that $\K$ has no maximal models in $\lambda^+$. 

Assume for the sake of contradiction that $M\in \K_{\lambda^+}$ is a maximal model. Let $N\lea M$ such that $N \in \K_\lambda$. By the maximality of $M$ together with Lemma \ref{ex-ext}, Lemma \ref{l2} and amalgamation in $\lambda$,  there is $M_0 \in \K_\lambda$ with $N \lea M_0 \lea M$ and  $q_0\in \gS^{\lambda-unq}(M_0)$. Let $\langle M_i\in \K_\lambda :  i<\lambda^+\rangle $ be a resolution of $M$ with $M_0$ as before. We build $\langle p_i :  i <\lambda^+\rangle$ such that:
\begin{enumerate}
    \item $p_0=q_0$;
    \item if $i< j< \lambda^+$, then $p_i\leq p_j$;
    \item for every $i < \lambda^+$,  $p_i\in \gS^{\lambda-unq}(M_i)$;
    \item for every $j < \lambda^+$, $\langle p_i :  i<j\rangle$ is coherent . 
\end{enumerate}
\fbox{Construction} The base step is given and the successor step can be achieved  using  Lemma \ref{many-r} and Remark \ref{r1}. So assume $i$ is limit, take $p_i$ to be an upper bound of $\langle p_j :  j<i\rangle$ given by Fact \ref{coh}. By Fact \ref{coh}  $\langle p_j :  j<i +1 \rangle$ is coherent so we only need to show that $p_i \in  \gS^{\lambda-unq}(\bigcup_{j<i}M_j )$.

By Remark \ref{r1} it suffices to show that $p_i \in \gS^{\lambda-ext}(\bigcup_{j<i}M_j)$. Since $p_0\in \gS^{\lambda-unq}(M_0)$ and $M_0\lea \bigcup_{j<i}M_j$, there is $q\in \gS^{\lambda-ext}(\bigcup_{j < i} M_j)$ such that $q\geq p_0$ by Lemma \ref{many-r}. 

We show that for every $j<i$, $q\restriction_{M_j}=p_i\restriction_{M_j}$. Let $j<i$. Since $q\restriction_{M_j}\in \gS^{\lambda-ext}(M_j)$, $p_i\restriction_{M_j}=p_j \in \gS^{\lambda-ext}(M_j)$ and  both extend $p_0$ a $\lambda$-unique type,  $q\restriction_{M_j}=p_i\restriction_{M_j}$.

Therefore, $q = p_i$ as $\K$ is $(<\lambda^+, \lambda)$-local. Hence $p_i \in \gS^{\lambda-ext}(\bigcup_{j<i}M_j)$ as $q\in \gS^{\lambda-ext}(\bigcup_{j < i} M_j)$.

 \fbox{Enough} Let $q^*\in \gS^{na}(M)$ be an upper bound of the coherent sequence $\langle p_i :  i <\lambda^+\rangle$ given by Fact \ref{coh}. As $q^*$ is a non-algebraic type, $M$ has a proper extension which contradicts our assumption that $M$ is maximal. \end{proof}
 
At the cost of strengthening the cardinal arithmetic hypothesis from $\lambda <2^{\aleph_0}$ to $\lambda^+ <2^{\aleph_0}$ we can drop the assumption that $|\gS^{na}(M)|<2^{\aleph_0}$ for every $M \in \K_\lambda$.

\begin{lem}\label{main-weak1} Suppose $\lambda^+<2^{\aleph_0}$.
 Assume $\K$ has amalgamation in $\lambda$, no maximal model in $\lambda$, and is stable for $\lambda$-algebraic types in $\lambda$. If $\K$ is $(<\lambda^+, \lambda)$-local, then $\K$ has a model of cardinality $\lambda^{++}$.
\end{lem}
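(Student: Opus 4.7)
The plan is a dichotomy on whether the extra hypothesis of Theorem~\ref{main-2-weak} already holds. If $|\gS^{na}(M)| < 2^{\aleph_0}$ for every $M \in \K_\lambda$, then Theorem~\ref{main-2-weak} applies directly. Otherwise there is $N^* \in \K_\lambda$ with $|\gS^{na}(N^*)| \geq 2^{\aleph_0}$, and since $\lambda^+ < 2^{\aleph_0}$ and $2^{\aleph_0}$ is a cardinal, $|\gS^{na}(N^*)| \geq \lambda^{++}$. I handle this ``many types'' case by contradiction: suppose $\K$ has no model of cardinality $\lambda^{++}$ and derive a contradiction.

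The first step is to locate a $\lea$-maximal $M^* \in \K_{\lambda^+}$ with $N^* \lea M^*$. Using amalgamation in $\lambda$ and no maximal model in $\lambda$, extend $N^*$ to a model of cardinality $\lambda^+$ via the standard length-$\lambda^+$ chain in $\K_\lambda$. Then iteratively take proper $\lea$-extensions in $\K_{\lambda^+}$: if this iteration could continue for $\lambda^{++}$ successor steps, the union of the chain would be in $\K$ and of cardinality $\lambda^{++}$, contradicting the assumption. Hence some $M^* \in \K_{\lambda^+}$ above $N^*$ has no proper $\lea$-extension in $\K_{\lambda^+}$, which is equivalent, via the Löwenheim–Skolem–Tarski and coherence axioms, to being $\lea$-maximal in all of $\K$ (any proper extension in $\K$ contains a proper extension in $\K_{\lambda^+}$).

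The second step shows that every $p \in \gS^{na}(N^*)$ is realized in $M^*$. Fix $N' \in \K_\lambda$ with $N^* \lea N'$ and $a \in N'$ realizing $p$, and resolve $M^* = \bigcup_{i<\lambda^+} M_i$ continuously with $M_i \in \K_\lambda$, $M_0 = N^*$. Using only amalgamation in $\lambda$, build an increasing continuous system $\langle L_i : i<\lambda^+\rangle$ in $\K_\lambda$ with $L_0 = N'$ and $L_{i+1}$ amalgamating $L_i$ and $M_{i+1}$ over $M_i$, taking unions at limits. The direct limit $L \in \K_{\lambda^+}$ is a $\lea$-extension of $M^*$ that contains a realization of $p$, so by maximality of $M^*$ we have $L = M^*$ and the realization already lies in $M^*$. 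Since distinct types yield distinct realizations, the $\lambda^{++}$-many chosen types produce $\lambda^{++}$ distinct elements of $M^*$, contradicting $\|M^*\| = \lambda^+$.

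The main obstacle is the iterated amalgamation in the second step. With amalgamation available only in $\lambda$ one cannot directly amalgamate the $\lambda^+$-sized $M^*$ with $N'$, and instead must propagate amalgamation in $\lambda$ along the length-$\lambda^+$ resolution of $M^*$, carefully renaming at each successor stage so that both $L_i \lea L_{i+1}$ and $M_{i+1} \lea L_{i+1}$ hold in a coherent directed system whose limit contains $M^*$ as a $\lea$-substructure. This standard but delicate directed-limit technique is what allows the cardinal arithmetic hypothesis $\lambda^+ < 2^{\aleph_0}$ to substitute for the type-counting hypothesis $|\gS^{na}(M)| < 2^{\aleph_0}$ of Theorem~\ref{main-2-weak}.
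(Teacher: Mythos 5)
Your proposal is correct and is essentially the paper's own argument: both hinge on the observation that if $\K_{\lambda^{++}}=\emptyset$ then every $M\in\K_\lambda$ sits below a maximal $N\in\K_{\lambda^+}$ which, by iterated amalgamation in $\lambda$ along a resolution of $N$, realizes every type in $\gS^{na}(M)$, forcing $|\gS^{na}(M)|\leq\lambda^+<2^{\aleph_0}$ so that Theorem \ref{main-2-weak} applies. Your case split and the direct cardinality contradiction in the ``many types'' case is just the contrapositive packaging of the paper's single reductio, and your directed-system construction correctly fills in the step the paper compresses into ``by amalgamation in $\lambda$ and maximality of $N$.''
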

\begin{proof}
 Assume for the sake of contradiction that $\K_{\lambda^{++}}=\emptyset$. We show that for every  $M\in \K_\lambda$, $|\gS^{na}(M)| < 2^{\aleph_0}$. This is enough by Theorem \ref{main-2-weak}.
 
Let $M\in \K_\lambda$. Then there is $M\lea N\in \K_{\lambda^+}$ maximal. Every $p\in \gS^{na}(M)$ is realized in $N$ by amalgamation in $\lambda$ and maximality of $N$. Thus $|\gS^{na}(M)|\leq \| N \| = \lambda^+$. Since $\lambda^+<2^{\aleph_0}$ by assumption, $|\gS^{na}(M)| < 2^{\aleph_0}$.
\end{proof}
\begin{remark}
Theorem \ref{main-2-weak} and Lemma \ref{main-weak1} are new even for universal classes with countable Löwenheim-Skolem-Tarski number and $\lambda = \aleph_0$.
\end{remark}

We use Theorem \ref{main-2-weak} to obtain the result mentioned in the abstract. 

\begin{thm}\label{main-weak2} Suppose $\lambda<2^{\aleph_0}$. Let $\K$ be an abstract elementary class with $\lambda \geq \LS(\K)$.  Assume $\K$ has amalgamation in $\lambda$, no maximal model in $\lambda$, and is stable in $\lambda$. If $\K$ is $(<\lambda^+, \lambda)$-local, then $\K$ has a model of cardinality $\lambda^{++}$.
\end{thm}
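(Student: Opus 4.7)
The plan is to obtain Theorem \ref{main-weak2} as a direct corollary of Theorem \ref{main-2-weak}. All that is required is to verify the two hypotheses of Theorem \ref{main-2-weak} that are not literally assumed in Theorem \ref{main-weak2}, namely stability for $\lambda$-algebraic types in $\lambda$ and $|\gS^{na}(M)|<2^{\aleph_0}$ for every $M\in \K_\lambda$. Both will follow immediately from the assumption of stability in $\lambda$ combined with $\lambda<2^{\aleph_0}$.

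First I would fix $M\in \K_\lambda$ and observe that, by definition, $\gS^{\lambda-al}(M)\subseteq \gS^{na}(M)\subseteq \gS(M)$. Hence stability in $\lambda$, i.e.\ $|\gS(M)|\le \lambda$, gives both $|\gS^{\lambda-al}(M)|\le \lambda$ and $|\gS^{na}(M)|\le \lambda$. The first inequality is exactly stability for $\lambda$-algebraic types in $\lambda$, and the second, together with the cardinal arithmetic hypothesis $\lambda<2^{\aleph_0}$, yields $|\gS^{na}(M)|<2^{\aleph_0}$.

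With these two facts in hand, every hypothesis of Theorem \ref{main-2-weak} is verified: amalgamation in $\lambda$, no maximal model in $\lambda$, and $(<\lambda^+,\lambda)$-locality are inherited verbatim from the hypotheses of Theorem \ref{main-weak2}, while stability for $\lambda$-algebraic types in $\lambda$ and the bound $|\gS^{na}(M)|<2^{\aleph_0}$ follow as above. Invoking Theorem \ref{main-2-weak} then produces a model of cardinality $\lambda^{++}$ in $\K$, which is the desired conclusion. There is no real obstacle here; the content of the theorem lies entirely in Theorem \ref{main-2-weak}, and the present statement is simply its cleaner packaging under the stronger (and more familiar) hypothesis of full stability in $\lambda$.
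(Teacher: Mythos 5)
Your proposal is correct and matches the paper's own proof: the paper likewise derives the theorem as an immediate corollary of Theorem \ref{main-2-weak}, noting that stability in $\lambda$ gives $|\gS^{na}(M)|\le\lambda<2^{\aleph_0}$ (and, implicitly, stability for $\lambda$-algebraic types since $\gS^{\lambda-al}(M)\subseteq\gS(M)$). Your write-up just makes the second verification explicit.
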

\begin{proof}
 We show that for every  $M\in \K_\lambda$, $|\gS^{na}(M)| < 2^{\aleph_0}$. This is enough by Theorem \ref{main-2-weak}. Let $M\in \K_\lambda$.  $|\gS^{na}(M)|\leq \lambda$ by stability in $\lambda$. Since $\lambda<2^{\aleph_0}$ by assumption, $|\gS^{na}(M)| < 2^{\aleph_0}$.
\end{proof}

\begin{remark}\label{re<} For AECs $\K$ with $\LS(\K) > \aleph_0$ or $\lambda  > \aleph_0$, the result is new even for universal classes. 
 For AECs $\K$ with $\LS(\K)= \aleph_0$ and $\lambda = \aleph_0$, the assumption that $\lambda < 2^{\aleph_0}$ is vacuous. This result for $(<\aleph_0, \aleph_0)$-tame AECs can be obtained using \cite[4.7]{shvas-apal}, \cite[5.8]{shvas-apal}, \cite[II.4.13]{shelahaecbook}, but the result has never been stated in the literature.  Moreover, the argument presented in this paper is significantly simpler than the argument using the results of Shelah and Vasey.  Furthermore,
 the result is new for $(\aleph_0, \aleph_0)$-local AECs.


 \end{remark}

\begin{remark} It is worth pointing out that  Theorem \ref{main-2-weak} is significantly stronger than Theorem \ref{main-weak2} as we only assume that the AEC is stable for $\lambda$-algebraic types in $\lambda$ instead of stable in $\lambda$. 
\end{remark}

\end{document}